\providecommand\@dotsep{5}
\def\listtodoname{List of Todos}
\def\listoftodos{\@starttoc{tdo}\listtodoname}
\numberwithin{equation}{section}
\newtheorem{theorem}{Theorem}[section]
\newtheorem{lemma}[theorem]{Lemma}
\newtheorem{remark}{Remark}
\begin{document}
	
	
	\title [Saddle solutions for Allen-Cahn type equations involving the prescribed mean curvature operator]{Saddle solutions for Allen-Cahn type equations involving the prescribed mean curvature operator}
	
	\author{Renan J. S. Isneri}
	
	\address[Renan J. S. Isneri]
	{\newline\indent Unidade Acad\^emica de Matem\'atica
		\newline\indent 
		Universidade Federal de Campina Grande,
		\newline\indent
		58429-970, Campina Grande - PB - Brazil}
	\email{\href{renan.isneri@academico.ufpb.br}{renan.isneri@academico.ufpb.br}}
	
	\pretolerance10000
	
	\begin{abstract}
		\noindent The goal of this paper is to investigate the existence of saddle solutions for some classes of elliptic partial differential equations of the Allen-Cahn type, formulated as follows:
		\begin{equation*}
			-div\left(\frac{\nabla u}{\sqrt{1+|\nabla u|^2}}\right) + A(x,y)V'(u)=0~~\text{ in }~~\mathbb{R}^2.
		\end{equation*}
		Here, the function $A:\mathbb{R}^2\to\mathbb{R}$ exhibits periodicity in all its arguments, while $V:\mathbb{R}\to\mathbb{R}$ characterizes a double-well symmetric potential with minima at $t=\pm\alpha$.
	\end{abstract}
	\thanks{}
	\subjclass[2020]{Primary: 34C37, 35J62, 35J93} 
	\keywords{Saddle solutions, Quasilinear elliptic equations, Mean curvature operator, Allen-Cahn equations}
	
	\maketitle


\section{Introduction}

Over the past 50 years, in view of several concrete real-world applications, considerable attention has been devoted to addressing the existence and classification of bounded solutions of the elliptic Allen-Cahn equations, whose prototype is given by
\begin{equation}\label{001}
	-\Delta u+V'(u)=0\quad \text{in}\quad \mathbb{R}^N,
\end{equation}
where the standard model of $V:\mathbb{R}\to\mathbb{R}$ is the classical double well Ginzburg-Landau potential
\begin{equation}\label{002}
	V(u)=\frac{1}{4}(u^2-1)^2.
\end{equation}
The concept of “{\it heteroclinic solutions}” has been extensively investigated in equations of the \eqref{001} type, leading to the emergence of numerous constructions documented in the literature. This type of solutions was the starting point to provide a rich amount of differently shaped families of solutions such as saddle, stationary layered and multibump solutions. Readers seeking a more comprehensive understanding of these solution types are encouraged to explore further in references such as \cite{Alessio0,Alessio1,Alessio2,AJM2,Rabinowitz1,RSlibro}.
 
 Equation \eqref{001} represents a reaction-diffusion type equation originating from mathematical physics. The initial model, encompassing \eqref{001}-\eqref{002}, was first proposed in 1979 by S. Allen and J. Cahn \cite{Allen} to delineate the phenomenon of phase separation within multi-component alloy systems. In this framework, the solution $u$ serves as a phase parameter, characterizing the material's state at each point, with the trivial solutions $u=-1$ and $u=+1$ corresponding to the pure phases of the system. Since then much progress has been made on  \eqref{001}-\eqref{002}, and moreover, its variations have been broadly treated in the PDE literature in the last years. These variations are mostly of one of the two types:
\begin{itemize}
	\item[(1)] The Laplacian operator is replaced by more general operators; \vspace{0.2cm}
	\item[(2)] \eqref{001} is changed to a class of more general semilinear elliptic equations whenever \eqref{002} is replaced by a class of potentials with different or even similar geometry.
\end{itemize}
Typically, both cases can occur simultaneously. We would like to cite some works involving generalizations and variations concerning problem \eqref{001}-\eqref{002}. In \cite{AAT} and \cite{Cabre2}, discussions revolve around results concerning the fractional Laplacian operator. On the other hand, \cite{Alves4} and \cite{Alves5} delve into outcomes associated with the $\Phi$-Laplacian operator, encompassing the classical $p$-Laplacian. Other interesting works involving the prescribed mean curvature operator are found in \cite{Alves1,Bonheure1}. Lastly, for a recent comprehensive overview of a fourth-order equation, we refer the reader to \cite{Bonheure}.

In this work, we will show that the following class of prescribed mean curvature equations
\begin{equation}\label{E1}
	-div\left(\frac{\nabla u}{\sqrt{1+|\nabla u|^2}}\right) + A(x,y)V'(u)=0~~\text{ in }~~\mathbb{R}^2,
\end{equation}
where $A(x,y)$ is $1$-periodic in all its variables, has a saddle solution whenever the distance between the roots of the double well symmetric potential $V$ is small. Moreover, in the particular case, when $A(x,y)$ is a positive constant, we will show that there is an infinite number of geometrically distinct saddle-type solutions for \eqref{E1}. Throughout this paper, the oscillatory factor $A(x,y)$ satisfies precisely the following assumptions:
\begin{itemize}
	\item[($A_1$)] $A$ is continuous and $A(x,y)>0$ for each $(x,y)\in\mathbb{R}^2$;
	\item[($A_2$)] $A(x,y)=A(-x,y)=A(x,-y)$ for all $(x,y)\in\mathbb{R}^2$;
	\item[($A_3$)] $A(x,y)=A(x+1,y)=A(x,y+1)$ for any $(x,y)\in\mathbb{R}^2$;
	\item[($A_4$)] $A(x,y)=A(y,x)$ for all $(x,y)\in\mathbb{R}^2$.
\end{itemize}
An interesting model for $A$ is described by the expression:
$$
A(x,y)=\cos(2\pi x)\cos(2\pi y)+c,
$$
where $c$ is a constant with $c>1$. From now on, let's consider $\mathcal{V}=\{V_\alpha\}_{\alpha>0}$ as a class of symmetric potentials $V_\alpha:\mathbb{R}\to\mathbb{R}$ that satisfy the following conditions:
\begin{itemize}
	\item[($V_1$)] $V_{\alpha}(t)\geq 0$ for all $t\in\mathbb{R}$ and $V_{\alpha}(t)=0\Leftrightarrow t=-\alpha,\alpha$ for $\alpha>0$.
	\item[($V_{2}$)] $V_{\alpha}(-t)=V_{\alpha}(t)$ for any $t\in\mathbb{R}$ and $V^{''}_{\alpha}(\pm\alpha)>0$.
\end{itemize}
With respect to class $\mathcal{V}$, we will also assume the following local uniform estimate involving the root $\alpha$ of $V_\alpha$:
\begin{itemize}
	\item[($V_3$)] Given $\lambda>0$, there exists $C=C(\lambda)>0$ independent of $\alpha\in(0,\lambda)$ such that
	$$
	\displaystyle\max_{|t|\in[0,\alpha]}|V'_\alpha(t)|\leq C, \quad \forall V_\alpha\in\mathcal{V} \quad \text{with} \quad \alpha\in(0,\lambda).
	$$
\end{itemize}
An important example of a family $\mathcal{V}$ of potentials $V_\alpha$, for which conditions $(V_1)$-$(V_3)$ are satisfied is given by: 
$$
V_\alpha(t)=(t^2-\alpha^2)^2,~~\alpha>0.
$$

Recently, Kurganov and Rosenau in \cite{Kurganov} reported an interesting physical motivation for investigating transition-type solutions to Allen-Cahn type equations involving the prescribed mean curvature, as exemplified by equation \eqref{E1}. Motivated by \cite{Kurganov}, Bonheure, Obersnel and Omari in \cite{Bonheure1} were led to minimization problems defined in certain function classes in $BV_{\text{loc}}(\mathbb{R})$ to investigate the existence of a basic heteroclinic solution, which naturally connect the stationary points $\pm1$, of the one-dimensional equation
\begin{equation}\label{E2}
	-\left(\frac{q'}{\sqrt{1+(q')^2}}\right)'+a(t)V'(q)=0~~\text{in}~~\mathbb{R},
\end{equation}
where $V$ a double-well potential with minima at $t=\pm1$ and $a$ is asymptotic at infinity to a positive periodic function with $0<\displaystyle\operatorname{ess}\inf_{t\in\mathbb{R}}a(t)$. After, Alves and Isneri in \cite{Alves1} also explored the existence of basic heteroclinic solution for \eqref{E2} in the case where $a\in L^\infty(\mathbb{R})$ is an even non-negative function with $0<\displaystyle\inf_{t\geq M}a(t)$ for some $M>0$. In that paper, inspired to the one introduced by Obersnel and Omari in \cite{Omari}, the authors used cutting techniques on the mean curvature operator to build up a variational framework on the usual Orlicz-Sobolev space $W^{1,\Phi}_{\text{loc}}(\mathbb{R})$ to show that \eqref{E2} has a  solution $q$ such that $$\lim_{t\to\pm\infty}q(t)=\pm\alpha$$ whenever the distance between the roots $\pm\alpha$ of the symmetric potential $V$ is small. When $a(t)=1$, it was shown in the recent work by Alves, Isneri and Montecchiari \cite{Alves2} that under certain conditions in potential $V$, there exists $\alpha_0>0$ such that for each $\alpha\in(0,\alpha_0)$ the equation \eqref{E2} has, up to translation, a unique twice differentiable heteroclinic solution $q$ in $C^{1,\gamma}_{\text{loc}}(\mathbb{R})$ for some $\gamma\in(0,1)$ satisfying the following exponential decay estimates 
\begin{equation*}\label{exp1}
	0<\alpha-q(t)\leq\theta_1e^{-\theta_2t}~~\text{ and }~~0<q'(t)\leq \beta_1e^{-\beta_2t}~~\text{ for all }t\geq0
\end{equation*}
and 
\begin{equation*}\label{exp2}
	0<\alpha+q(t)\leq \theta_3e^{\theta_4t}~~\text{ and }~~0<q'(t)\leq \beta_3e^{\beta_4t}~~\text{ for all }t\leq0,
\end{equation*}
for some real numbers $\theta_i,\beta_i>0$. When $V(t)=\left(t^2-\alpha^2\right)^2$ the exponential estimates above are refined to 
\begin{equation*}\label{th0}
	\alpha\tanh\left(\alpha\sqrt{2} t\right)\leq q(t)\leq \alpha\tanh\left(\frac{\alpha\sqrt{2}t}{\kappa}\right)~\text{ for }~t\geq 0,
\end{equation*}
where $\kappa$ is a positive constant that depends on $\|q'\|_{L^{\infty}(\mathbb{R})}$. Heteroclinic-type solutions are also observed in planar scenarios. For instance, in \cite{Alves3}, Alves and Isneri show the existence of such solutions for \eqref{E1}. These solutions, denoted by $u(x,y)$, exhibit periodicity in $y$ with a particular asymptotic behavior:
$$
u(x,y)\rightarrow\alpha\text{ as }x\rightarrow-\infty\text{ and }u(x,y)\rightarrow\beta\text{ as }x\rightarrow+\infty,\text{ uniformly in }y\in\mathbb{R},
$$
where $\alpha$ and $\beta$ represent the minima of a nonsymmetric double-well potential $V$. This phenomenon occurs under the condition that $A(x,y)$ displays oscillatory behavior with respect to the variable $y$ and satisfies some geometric constraints on $x$, such as periodicity, asymptotic periodicity at infinity, and asymptotically away from zero at infinity.

In this study, we show that equation \eqref{E1} exhibits additional transition-type solutions, including the saddle solution, which are more intricate in nature. The key theorems of this paper are outlined below.

\begin{theorem}\label{T1}
	Assume ($A_1$)-($A_4$). Given $L>0$, there exists $\delta>0$ such that for each $V_\alpha\in\mathcal{V}\cap C^2(\mathbb{R},\mathbb{R})$ with $\alpha\in(0,\delta)$, the prescribed mean curvature equation
	\begin{equation*}
		-div\left(\frac{\nabla u}{\sqrt{1+|\nabla u|^2}}\right) + A(x,y)V_{\alpha}'(u)=0~~\text{ in }~~\mathbb{R}^2,
	\end{equation*} 
	possesses a weak solution $v_{\alpha,L}$ in $C^{1,\gamma}_{\text{loc}}(\mathbb{R}^2)$, for some $\gamma\in(0,1)$, satisfying the following properties:
	\begin{itemize}
		\item [(a)] $0<v_{\alpha,L}(x,y)<\alpha$ on the fist quadrant in $\mathbb{R}^2$,
		\item[(b)] $v_{\alpha,L}(x,y)=-v_{\alpha,L}(-x,y)=-v_{\alpha,L}(x,-y)$ for all $(x,y)\in\mathbb{R}^2$,
		\item[(c)] $v_{\alpha,L}(x,y)=v_{\alpha,L}(y,x)$ for any $(x,y)\in\mathbb{R}^2$,
		\item[(d)] $v_{\alpha,L}(x,y)\rightarrow \alpha$ as $x\rightarrow\pm\infty$ and $y\rightarrow\pm\infty$,
		\item[(e)]  $v_{\alpha,L}(x,y)\rightarrow -\alpha$ as $x\rightarrow\mp\infty$ and $y\rightarrow\pm\infty$,
		\item[(f)] $\|\nabla v_{\alpha,L}\|_{L^{\infty}(\mathbb{R}^2)}\leq \sqrt{L}$.
	\end{itemize}
\end{theorem}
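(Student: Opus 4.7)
The plan is to extend to two dimensions the variational cutting technique used by Obersnel-Omari \cite{Omari} and Alves-Isneri \cite{Alves1} in the one-dimensional case, combined with a saddle symmetry framework as in the Allen-Cahn literature on saddle solutions \cite{Alessio0,Alessio1}. First, given $L>0$, I would modify the prescribed mean curvature operator by replacing $\Phi(s)=s/\sqrt{1+s^2}$ by an odd, strictly monotone cut-off $\Phi_L$ that coincides with $\Phi$ on $[-\sqrt{L},\sqrt{L}]$ and has, say, linear growth outside this interval. Writing $F_L$ for the corresponding convex primitive, this produces a well-posed modified energy
\begin{equation*}
J_L(u)=\int_{\mathbb{R}^2}F_L(|\nabla u|)\,dx\,dy+\int_{\mathbb{R}^2}A(x,y)V_\alpha(u)\,dx\,dy,
\end{equation*}
which is coercive on a natural Orlicz-Sobolev type space. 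The crucial observation is that any critical point $u$ of $J_L$ satisfying $|\nabla u|\leq\sqrt{L}$ almost everywhere is automatically a weak solution of the original equation \eqref{E1}.

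Second, to install the saddle symmetries, I would restrict attention to functions $u$ with $|u|\leq\alpha$ satisfying $u(x,y)=-u(-x,y)=-u(x,-y)=u(y,x)$. By $(A_2)$ and $(A_4)$ the functional $J_L$ is invariant under this group of reflections, every such $u$ vanishes on the coordinate axes, and it is determined by its values on the closed first quadrant. To recover compactness I would minimize the localized energy on the square $Q_R=[0,R]^2$ over the class of admissible functions satisfying $u=0$ on $\partial Q_R\cap\{xy=0\}$, $u=\alpha$ on the remaining part of $\partial Q_R$, and $u(x,y)=u(y,x)$, and then extend by odd reflection across the two axes. The direct method in the Orlicz-Sobolev setting yields a minimizer $v_{\alpha,L,R}$; a diagonal extraction as $R\to\infty$, based on local $L^\infty$ and gradient bounds for $J_L$-minimizers, produces a limit $v_{\alpha,L}$ that solves the Euler-Lagrange equation for $J_L$ on all of $\mathbb{R}^2$ and retains the symmetries $(b)$ and $(c)$.

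Third, item $(a)$ would follow by the strong maximum principle applied on the open first quadrant, once one observes that $0$ and $\alpha$ are respectively a sub- and a supersolution that are not identically attained inside, together with the positivity of $A$ in $(A_1)$ and the sign of $V_\alpha'$ on $(0,\alpha)$. The asymptotic behaviors $(d)$ and $(e)$ would then be obtained by a barrier argument far from the diagonal: along horizontal and vertical rays, $v_{\alpha,L}$ is sandwiched between one-dimensional barriers modelled on the heteroclinic solution of \cite{Alves2}, and the diagonal symmetry $(c)$ propagates this decay in all four asymptotic directions of the plane.

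The main obstacle, and the reason for the smallness parameter $\delta$, is the global gradient bound $(f)$, which is exactly what makes the cut-off fictitious and produces a genuine solution of \eqref{E1}. The plan is to combine interior gradient estimates of Ladyzhenskaya-Uraltseva / Lieberman type for the uniformly elliptic modified operator $\mathrm{div}(\Phi_L(\nabla u))$, the $L^\infty$ bound $|v_{\alpha,L,R}|\leq\alpha$, and the uniform bound on $|V_\alpha'|$ in $[-\alpha,\alpha]$ given by $(V_3)$, together with a scaling/balancing argument (the natural length scale of the transition layer grows as $\alpha\to 0$), to obtain an estimate of the form $\|\nabla v_{\alpha,L,R}\|_{L^\infty}\leq h(\alpha,L)$ uniformly in $R$, with $h(\alpha,L)\to 0$ as $\alpha\to 0^+$. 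Choosing $\delta=\delta(L)>0$ so small that $h(\alpha,L)\leq\sqrt{L}$ for all $\alpha\in(0,\delta)$ then yields $(f)$ and promotes $v_{\alpha,L}$ to a weak solution of the prescribed mean curvature equation; classical quasilinear regularity finally upgrades $v_{\alpha,L}$ to $C^{1,\gamma}_{\text{loc}}(\mathbb{R}^2)$.
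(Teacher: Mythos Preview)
Your high-level strategy---truncate the mean-curvature integrand to obtain a uniformly elliptic auxiliary operator, produce a saddle solution of the auxiliary equation, and then use smallness of $\alpha$ to force $|\nabla v|\le\sqrt{L}$ so the truncation is fictitious---is exactly the scheme the paper follows. Two key steps, however, are handled differently, and in your outline they carry real gaps.

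\textbf{Construction of the auxiliary saddle solution.} The paper does \emph{not} build $v_{\alpha,L}$ by minimizing on squares $Q_R$ and sending $R\to\infty$. Instead it invokes an abstract result (Theorem~\ref{TE1}) which extends the saddle-solution theory of \cite{Alves4,Alves5} for $-\Delta_\Phi u+A V'(u)=0$. The point is that the truncated $\phi_L$ is \emph{non-increasing}, so it fails the monotonicity hypothesis $(\phi_4)$ used in \cite{Alves5}; the paper isolates a weaker replacement $(\tilde{\phi}_4)$ (satisfied whenever $0<c_1\le\phi\le c_2$, hence by $\phi_L$), observes that the exponential-decay machinery for heteroclinics in \cite{Alves4,Alves5} survives under $(\tilde{\phi}_4)$, and thereby obtains properties (a)--(e) for the auxiliary solution as a package. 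Your direct minimization route is plausible in spirit, but the hard content---nontriviality of the limit and the asymptotics (d)--(e)---is precisely what the heteroclinic/decay estimates in \cite{Alves4,Alves5} are for; your ``barrier argument far from the diagonal'' would have to reproduce that analysis and is not a free step.

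\textbf{The gradient bound $(f)$.} The paper's argument is not a scaling estimate but a compactness/contradiction: if $\|v_{\alpha_n,L}\|_{C^1(B_1(z_n))}\ge\sqrt{L}$ along $\alpha_n\to0$, translate by $z_n$, use $(V_3)$ to bound the source uniformly, apply Lieberman's $C^{1,\gamma}$ estimate to get equi-regularity, pass to a $C^1$-limit via Arzel\`a--Ascoli, and note that $\|v_{\alpha_n,L}\|_{L^\infty}\le\alpha_n\to0$ forces the limit to be $0$---contradiction. Your ``$h(\alpha,L)\to0$ via scaling/balancing'' is not obviously available: a raw Lieberman estimate gives a $C^{1,\gamma}$ bound depending on the ellipticity constants of $\Phi_L$ and the $L^\infty$ norm of the source (which by $(V_3)$ is bounded but need not tend to $0$), and the truncated operator has no clean scaling. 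The compactness step, using $\|v\|_{L^\infty}\le\alpha$, is what actually drives the gradient to $0$.
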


The items $(d)$ and $(e)$ of the theorem above tells us that along directions parallel to the axes, the saddle solution $v_{\alpha,L}$ is uniformly asymptotic to stationary solutions $\pm\alpha$. Moreover, when $A(x,y)$ is a positive constant, we obtain an infinite number of geometrically distinct saddle-type solutions to the equation \eqref{E1}. This observation is detailed in the ensuing result. 

\begin{theorem}\label{T2}
	Assume that $A(x,y)$ is a positive constant $b$. Then, given $L>0$, there exists $\delta>0$ such that for each $V_\alpha\in\mathcal{V}\cap C^2(\mathbb{R},\mathbb{R})$ with $\alpha\in(0,\delta)$ and for each $j\geq2$, the prescribed mean curvature equation
	\begin{equation*}
		-div\left(\frac{\nabla u}{\sqrt{1+|\nabla u|^2}}\right) + bV_{\alpha}'(u)=0~~\text{ in }~~\mathbb{R}^2,
	\end{equation*}
 	possesses a weak solution $v_{\alpha,L,j}$ in $C^{1,\gamma}_{\text{loc}}(\mathbb{R}^2)$, for some $\gamma\in(0,1)$, such that if \linebreak $\tilde{v}_{\alpha,L,j}(\rho,\theta)=v_{\alpha,L,j}(\rho\cos(\theta),\rho\sin(\theta))$, then
	\begin{itemize}
		\item [(a)] $0<\tilde{v}_{\alpha,L,j}(\rho,\theta)<\alpha$ for any $\theta\in[\frac{\pi}{2}-\frac{\pi}{2j},\frac{\pi}{2})$ and $\rho>0$,
		\item[(b)] $\tilde{v}_{\alpha,L,j}(\rho,\frac{\pi}{2}+\theta)=-\tilde{v}_{\alpha,L,j}(\rho,\frac{\pi}{2}-\theta)$ for all $(\rho,\theta)\in[0,+\infty)\times\mathbb{R}$,
		\item[(c)] $\tilde{v}_{\alpha,L,j}(\rho,\theta+\frac{\pi}{j})=-\tilde{v}_{\alpha,L,j}(\rho,\theta)$ for all $(\rho,\theta)\in[0,+\infty)\times\mathbb{R}$,
		\item[(d)] $\tilde{v}_{\alpha,L,j}(\rho,\theta)\rightarrow(-\alpha)^{k+1}$ as $\rho\rightarrow+\infty$ whenever $\theta\in\left(\frac{\pi}{2}+k\frac{\pi}{j},\frac{\pi}{2}+(k+1)\frac{\pi}{j}\right)$ for $k=0,\ldots,2j-1$,
		\item[(e)] $\|\nabla v_{\alpha,L,j}\|_{L^{\infty}(\mathbb{R}^2)}\leq \sqrt{L}$.
	\end{itemize}
\end{theorem}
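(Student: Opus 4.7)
The strategy is to adapt the construction used for Theorem \ref{T1} to the finer symmetry group generated by the rotation by angle $\pi/j$ (with a sign flip) together with the reflection across the $y$-axis. This adaptation is possible precisely because $A(x,y)\equiv b$ is invariant under every rotation of $\R^2$, so \eqref{E1} is equivariant under the full dihedral group of order $4j$, whereas in Theorem \ref{T1} only a $D_4$ action could be exploited. Combining (b) and (c) one sees that a function with the prescribed symmetries must vanish on each ray $\theta=\pi/2+k\pi/j$ and be even with respect to each intermediate ray $\theta=\pi/2-\pi/(2j)+k\pi/j$, so the natural fundamental domain is the closed angular sector
\[
S_j=\{(\rho\cos\theta,\rho\sin\theta):\rho\geq 0,\ \theta\in[\pi/2-\pi/(2j),\pi/2]\}
\]
of opening $\pi/(2j)$, and it suffices to construct a single profile $u:S_j\to[0,\alpha]$ vanishing on $\{\theta=\pi/2\}$ and satisfying a Neumann condition on $\{\theta=\pi/2-\pi/(2j)\}$.

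For the variational step I would use the same truncation of the mean curvature integrand $\sqrt{1+|\nabla u|^2}-1$ introduced in the proof of Theorem \ref{T1}, which enforces the a~priori bound $\|\nabla u\|_{L^\infty}\leq\sqrt{L}$ on minimizers. On bounded approximations $S_j\cap B_R$ one then minimizes the resulting functional in
\[
\mathcal{X}_{j,L,R}=\bigl\{u\in W^{1,\infty}(S_j\cap B_R):\ u=0\text{ on }\{\theta=\pi/2\},\ u=\alpha\text{ on }\{\rho=R\}\bigr\},
\]
leaving the radial side $\{\theta=\pi/2-\pi/(2j)\}$ free. The direct method yields a minimizer $u_R$, and the cutting and comparison arguments of \cite{Alves1,Alves3} and of the proof of Theorem \ref{T1} should show that, provided $\alpha<\delta$ for a suitable $\delta=\delta(L)$, one has $0<u_R<\alpha$ in the interior, $u_R$ solves the equation classically with the gradient constraint inactive, and a compactness argument in $C^1_{\text{loc}}$ gives a limit $u_\infty$ on $S_j$ as $R\to\infty$ realizing the Dirichlet datum on $\{\theta=\pi/2\}$ and the natural Neumann datum on $\{\theta=\pi/2-\pi/(2j)\}$.

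The profile $u_\infty$ is then extended to all of $\R^2$ by iterating the even reflection across the free ray $\theta=\pi/2-\pi/(2j)$ and the antisymmetric rotation prescribed by (c); properties (a), (b), (c) and (e) are built into the construction. For (d), I would compare $u_\infty$ on each interior radial ray with the one-dimensional heteroclinic solution of the associated truncated equation studied in \cite{Alves1,Alves2}, using the energy-minimality of $u_R$ to rule out any other asymptotic value and the strong maximum principle to rule out degeneracy in the interior of $S_j$.

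The main obstacle, and the only step where the argument really differs from the $j=2$ case of Theorem \ref{T1}, is to verify that the symmetric extension produces a genuine $C^{1,\gamma}_{\text{loc}}(\R^2)$ weak solution of \eqref{E1} across \emph{every} ray, not just inside each open sector. Across the free rays the matching is the reflection principle applied to the natural Neumann condition obtained in the minimization. Across the nodal rays one exploits the fact that $V'_\alpha$ is odd (by $(V_2)$), so that odd reflection preserves the equation; combined with the interior $C^{1,\gamma}$ estimates based on the uniform gradient bound $\sqrt L$, this glues the pieces into a classical solution on $\R^2$. Once the gluing is secured, the presence of exactly $2j$ nodal rays through the origin makes the solutions geometrically distinct for different values of $j\geq 2$.
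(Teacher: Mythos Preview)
Your construction is plausible in spirit but it is not the route the paper takes, and a couple of your steps are genuinely sketchy.

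\medskip
\textbf{What the paper actually does.} The proof of Theorem~\ref{T2} in the paper is essentially a two-line reduction, parallel to the proof of Theorem~\ref{T1}. One first works with the truncated quasilinear equation~\eqref{E3} and invokes Theorem~\ref{TE3} (the extension of the results of \cite{Alves4,Alves5} under the weakened hypothesis $(\tilde{\phi}_4)$, which $\phi_L$ satisfies by Lemma~\ref{L}) to obtain, for every $j\ge 2$, a pizza solution $v_{\alpha,L,j}$ of the $\Phi_L$-equation already enjoying (a)--(d). Then one repeats verbatim the contradiction argument of Lemma~\ref{LE51}: since $|v_{\alpha,L,j}|\le\alpha$ and $(V_3)$ gives a bound on $V'_\alpha$ that is uniform in $\alpha\in(0,1)$, Lieberman's estimates yield $C^{1,\gamma_0}$ compactness of suitable translates, and any putative sequence violating $\|\nabla v_{\alpha_n,L,j}\|_{L^\infty}\le\sqrt L$ would converge in $C^1(B_1(0))$ to the zero function, a contradiction. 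Nothing in that argument depends on $j$, so $\delta$ is uniform in $j$, and once $\|\nabla v_{\alpha,L,j}\|_{L^\infty}\le\sqrt L$ the truncation is inactive and $v_{\alpha,L,j}$ solves the genuine mean curvature equation.

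\medskip
\textbf{How your approach differs, and where it is shaky.} You bypass Theorem~\ref{TE3} entirely and try to build the pizza solutions from scratch by minimizing on truncated sectors $S_j\cap B_R$. That is closer to the internal machinery of \cite{Alves4}, but two points need more than a sentence. First, the boundary datum $u=\alpha$ on $\{\rho=R\}$ is incompatible with $u=0$ on the nodal ray at the corner, and in the actual construction of \cite{Alves4,Alves5} one works instead on infinite strips and minimizes within classes with prescribed asymptotics; the exponential decay estimates (obtained there via $(\tilde{\phi}_4)$) are what make the limit $R\to\infty$ produce a nontrivial object with the correct asymptotes. Your proposed comparison ``on each interior radial ray with the one-dimensional heteroclinic'' does not by itself yield (d); the paper's route gets (d) for free from Theorem~\ref{TE3}. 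Second, you fold the gradient bound into the sector-minimization step (``the gradient constraint inactive''), but in the paper this is a completely separate a~posteriori argument (Lemma~\ref{LE51}) that relies on $(V_3)$ and elliptic regularity and has nothing to do with cutting or comparison. Separating these two layers, as the paper does, is what makes the choice of $\delta$ transparent and uniform in $j$.
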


In other words, the saddle-type solution $\tilde{v}_{\alpha,L,j}$ is antisymmetric with respect to the half-line $\theta=\frac{\pi}{2}$, $\frac{2\pi}{j}$-periodic in the angle variable and has $L^\infty$-norm less than or equal to $\alpha$ with the asymptotic behavior at infinity described in item $(d)$. Moreover, the solutions $v_{\alpha,L,j}$ described in the theorem above are characterized by the fact that, along different directions parallel to the end lines, they are uniformly asymptotic to $\pm\alpha$ and such solutions may appropriately be termed ``pizza solutions''. 

In order to establish Theorems \ref{T1} and \ref{T2}, it was necessary improve the findings presented in the works of Alves and Isneri \cite{Alves4,Alves5}, particularly concerning saddle-type solutions, for a larger class of $N$-functions. For a comprehensive understanding of these advancements, readers are encouraged to refer directly to Section \ref{s2} of this work. Moreover, as far as we know, Theorems \ref{T1} and \ref{T2} are the first results in the literature on saddle-type solutions for some stationary Allen-Cahn-type equations involving the prescribed mean curvature operator in the whole plane. Transition-type solutions to equations involving the prescribed mean curvature operator is an extremely fascinating field of mathematics and there are still many open questions one can work on. For example, a possible extension to Theorems \ref{T1} and \ref{T2} would be to study the existence of a saddle solution for \eqref{E1} without requiring that the distance between the absolute minima $\pm\alpha$ of $V$ be small. We believe that a natural approach to solve such a problem would be to look for minima of an action functional on a convex subset of the space of functions of bounded variation $BV_{\text{loc}}(\mathbb{R})$. 

The structure of this manuscript unfolds as follows: In Section \ref{s2}, we delve into the analysis of the existence of saddle-type solutions for a specific class of quasilinear elliptic equations that complements and extends the investigations presented in \cite{Alves4,Alves5}. Taking into account the study in Section \ref{s2}, in Section \ref{s3} we will present the proof of our main results. Lastly, in Section \ref{s4}, we will make some final comments about our study.


\section{Existence of saddle solutions for quasilinear  equations}\label{s2}

We will show in this section that the main results about saddle-type solutions in works \cite{Alves4,Alves5} are extended to a larger class of $N$-functions than the class that was presented there. We will start with a brief review of what has been done in \cite{Alves5} for saddle solutions. To recapitulate, in that paper, the authors studied the existence of transition-type solutions for the following class of quasilinear elliptic equations given by
\begin{equation}\label{Equation2.1}
	-\Delta_{\Phi}u + A(x,y)V'(u)=0~~\text{ in }~~\mathbb{R}^2,
\end{equation}
where the conditions $(A_1)$-$(A_4)$ were assumed on $A$ and on $V$, $(V_{1})$-$(V_{2})$ and
\begin{itemize}
	\item[($V_{4}$)] There are $\delta_\alpha\in(0,\alpha)$ and $w_1, w_2>0$ such that 
	$$
	w_1\Phi(|t-\alpha|)\leq V(t)\leq w_2\Phi(|t-\alpha|)~~\forall t\in(\alpha-\delta_\alpha,\alpha+\delta_\alpha).
	$$
	\item[($V_{5}$)] There are $\omega_1,\omega_2,\omega_3,\omega_4,\tau>0$ such that 
	$$
	-\omega_3\phi(\omega_4|\alpha-t|)(\alpha-t)t\leq V'(t)\leq-\omega_1\phi(\omega_2|\alpha-t|)(\alpha-t)t~~\forall t\in[0,\alpha+\tau].
	$$ 
\end{itemize}
With respect to $\Phi:\mathbb{R}\to[0,+\infty)$, $\Phi$ is designated as an $N$-function, that is, it satisfies precisely the following properties
\begin{itemize}
	\item[(a)] $\Phi$ is continuous, convex and even,
	\item[(b)] $\Phi(t)=0$ if and only if $t=0$,
	\item[(c)] $\displaystyle \lim_{t\rightarrow 0}\dfrac{\Phi(t)}{t}=0$ and $\displaystyle \lim_{t\rightarrow +\infty}\dfrac{\Phi(t)}{t}=+\infty$.
\end{itemize}
Specifically, $\Phi$ has the form
$$
\Phi(t)=\int_0^{|t|}\phi(s)sds,
$$
with $\phi\in C^{1}([0,+\infty), [0,+\infty))$ satisfying
\begin{itemize}
	\item[($\phi_1$)] $\phi(t)>0$ and $(\phi(t)t)'>0$ for any $t>0$.
	\item[($\phi_2$)] There are $l,m\in\mathbb{R}$ with $1<l\leq m$ such that 
	$$
	l-1\leq \dfrac{(\phi(t)t)'}{\phi(t)}\leq m-1~~\text{for all}~~ t>0.
	$$
	\item[($\phi_3$)] There exist constants $c_1,c_2,\eta>0$ and $s>1$ satisfying
	$$
	c_1t^{s-1}\leq \phi(t)t\leq c_2t^{s-1}~~\text{for}~~t\in[0,\eta].
	$$	
	\item[($\phi_4$)] $\phi$ is non-decreasing on $(0,+\infty)$.
\end{itemize}
Finally, $\Delta_{\Phi}$  emerges as a quasilinear operator in divergence form, succinctly expressed as:
$$
\Delta_{\Phi}u=\text{div}(\phi(|\nabla u|)\nabla u).
$$

The conditions $(\phi_1)$-$(\phi_3)$ establish the groundwork for ensuring the existence of a weak solution $u_0$ to \eqref{Equation2.1} with specific properties. Firstly, $u_0$ is $1$-periodic in the variable $y$, odd symmetry with respect to $x$, and furthermore, it must satisfy
$$
0<u_0(x,y)<\alpha~~\text{for all}~~x>0. 
$$
Moreover, $u_0$ is a heteroclinic type solution, that is,
$$
u_0(x,y)\rightarrow-\alpha\text{ as }x\rightarrow-\infty\text{ and }u_0(x,y)\rightarrow\alpha\text{ as }x\rightarrow+\infty\text{ uniformly in }y\in\mathbb{R}.
$$
The introduction of condition $(\phi_4)$ serves a crucial purpose, allowing for the utilization of exponential decay estimates specifically for $u_0\pm\alpha$. This condition facilitates the derivation of an inequality crucial for further analysis, presented as 
\begin{equation}\label{5E1}
	\phi(|\zeta'(x)|)\leq \phi(\omega_2\zeta(x))~~\text{for all}~~x\in\mathbb{R},
\end{equation}
which involves the real function $\zeta:\mathbb{R}\to\mathbb{R}$ given by
\begin{equation}\label{5E2}
	\zeta(x)=\delta_\alpha\frac{\cosh\left(a\left(x-\frac{j-\frac{1}{4}+L}{2}\right)\right)}{\cosh\left(a\frac{j-\frac{1}{4}-L}{2}\right)},
\end{equation}
where $\omega_2$, $L$ and $j$ are chosen properly and the constant $a$ is small enough. The role of inequality \eqref{5E1} is pivotal as it permits direct calculations leading to the desired exponential-type estimates. For a comprehensive understanding of the intricate details and rigorous verification, readers are directed to Lemmas 4.4, 4.5, and 4.6 in the referenced work \cite{Alves5}. This study of the asymptotic behavior at infinity of heteroclinic solutions plays a fundamental role in the search of saddle-type solutions to \eqref{Equation2.1}, as outlined in the methodology employed in \cite{Alves4,Alves5}.

Our goal now is to replace $(\phi_4)$ with another condition on $\phi$ that includes a larger number of examples for $\phi$, including the monotonically non-decreasing functions. Specifically, we introduce the following assumption
\begin{itemize}
	\item[$(\tilde{\phi}_4)$] There exist positive constants $\kappa_1$ and $\kappa_2$ such that the inequality $$\phi(|\zeta'(t)|)\leq \kappa_1\phi(\kappa_2\zeta(t)) \quad \text{for all}\quad t\in\mathbb{R}$$ holds whenever $a>0$ is sufficiently small in \eqref{5E2}. 
\end{itemize}
The reader is invited to conduct a meticulous verification of the exponential decay estimates relative to the heteroclinic solutions as delineated in references \cite{Alves4,Alves5}, specifically observing their occurrence when substituting $(\phi_4)$ by $(\tilde{\phi}_4)$. It should be noted that functions $\phi$ satisfying $(\phi_4)$ also satisfy $(\tilde{\phi}_4)$. However, it's important to observe that the converse is not true, as certain functions, compliant with $(\tilde{\phi}_4)$, may not necessarily satisfy $(\phi_4)$, thereby establishing $(\tilde{\phi}_4)$ as a broader framework compared to $(\phi_4)$. In the following section, we provide an explicit example elucidating this distinction.

Under our current assumptions on $\phi$, we provide below a result that generalizes Theorem 1.2 of \cite{Alves5}.

\begin{theorem}\label{TE1}
	Assume ($\phi_1$)-($\phi_3$), ($\tilde{\phi}_4$), $V\in C^1(\mathbb{R},\mathbb{R})$, ($V_{1}$)-($V_{2}$), ($V_{4}$)-($V_{5}$) and ($A_1$)-($A_4$). Then, there is $v\in C^{1,\gamma}_{\text{loc}}(\mathbb{R}^2)$ for some $\gamma\in(0,1)$ such that $v$ is a weak solution of \eqref{Equation2.1} that verifies the following: 
	\begin{itemize}
		\item [(a)] $0<v(x,y)<\alpha$ on the fist quadrant in $\mathbb{R}^2$,
		\item[(b)] $v(x,y)=-v(-x,y)=-v(x,-y)$ for all $(x,y)\in\mathbb{R}^2$,
		\item[(c)] $v(x,y)=v(y,x)$ for any $(x,y)\in\mathbb{R}^2$,
		\item[(d)] $v(x,y)\rightarrow \alpha$ as $x\rightarrow\pm\infty$ and $y\rightarrow\pm\infty$,
		\item[(e)]  $v(x,y)\rightarrow -\alpha$ as $x\rightarrow\mp\infty$ and $y\rightarrow\pm\infty$.
	\end{itemize}
\end{theorem}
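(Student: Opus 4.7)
The plan is to follow closely the variational framework developed by Alves--Isneri in \cite{Alves5} for saddle solutions, and to verify that each step where $(\phi_4)$ was used goes through with the weaker inequality $(\tilde{\phi}_4)$. First, working on the first quadrant $Q=\{(x,y)\in\mathbb{R}^2:x>0,\;y>0\}$, I would introduce an admissible class of functions
\[
\Gamma=\{u\in W^{1,\Phi}_{\text{loc}}(Q):\; 0\le u\le\alpha\ \text{a.e. in}\ Q,\ u=0\ \text{on}\ \partial Q,\ u(x,y)=u(y,x)\}
\]
and the action functional $J(u)=\int_{Q}[\Phi(|\nabla u|)+A(x,y)V(u)]\,dxdy$. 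Using a comparison function built from the heteroclinic profile $u_0$ provided by the arguments of \cite{Alves5}, for instance a suitable truncation of $\min\{u_0(x),u_0(y)\}$ that is compatible with the vanishing boundary condition on $\partial Q$, one checks $\inf_\Gamma J<\infty$. A minimizing sequence then has a weak limit $v\in\Gamma$ by the coercivity and lower semicontinuity of $J$ granted by $(\phi_1)$--$(\phi_3)$.

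The key input that distinguishes this proof from \cite{Alves5} is the exponential decay of the heteroclinic profile $u_0(x,y)\to\alpha$ as $x\to+\infty$, which there was derived through the barrier $\zeta$ in \eqref{5E2} together with the pointwise inequality \eqref{5E1}. Under $(\tilde{\phi}_4)$ the inequality reads
\[
\phi(|\zeta'(t)|)\le\kappa_1\phi(\kappa_2\zeta(t)),
\]
and I would recompute the subsolution/supersolution verifications of Lemmas~4.4--4.6 of \cite{Alves5}: when the barrier $\alpha-\zeta$ (or $-\alpha+\zeta$) is inserted into $-\Delta_\Phi + AV'$ and combined with $(V_5)$, the term $\phi(|\zeta'|)|\zeta'|$ is now controlled by $\kappa_1\phi(\kappa_2\zeta)|\zeta'|$ in place of $\phi(\kappa_2\zeta)|\zeta'|$. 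The extra multiplicative factor $\kappa_1$ can be absorbed by shrinking $a>0$ in \eqref{5E2}, adjusting $\delta_\alpha$ and, if necessary, the constants $\omega_1,\omega_2$ so that the weak comparison principle still produces exponential decay for $\alpha-u_0$ and its derivatives, only with slightly modified multiplicative constants. This is the part of the argument that I expect to carry the main technical burden.

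With the heteroclinic building block and its exponential decay in hand, the remaining steps mirror \cite{Alves5}. Regularity theory for quasilinear operators of $\Phi$-Laplace type under $(\phi_1)$--$(\phi_3)$ promotes $v$ to $C^{1,\gamma}_{\text{loc}}$; a maximum principle argument together with $(V_1)$--$(V_2)$ shows that the constraints $0\le v\le\alpha$ are inactive in the interior of $Q$, so $v$ is a genuine weak solution of \eqref{Equation2.1} on $Q$ and item (a) follows. The diagonal symmetry (c) is enforced by $\Gamma$ and is compatible with the equation thanks to $(A_4)$ and Palais' principle of symmetric criticality; the odd reflections (b) across the axes extend $v$ to a weak solution on all of $\mathbb{R}^2$ because of $(A_2)$, the evenness of $V$, and the vanishing boundary values on $\partial Q$. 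Finally, the uniform limits (d)--(e) are obtained by sandwiching $v$ between translates of the heteroclinic profile $u_0$ in the $x$- and $y$-directions and letting the shift parameter tend to infinity, invoking the exponential estimates from the previous paragraph. The main obstacle, as anticipated, is the re-derivation of the heteroclinic exponential decay under the weaker hypothesis $(\tilde{\phi}_4)$; once this is in place, the saddle construction coincides essentially verbatim with that of \cite{Alves4,Alves5}.
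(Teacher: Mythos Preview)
Your proposal is correct and follows precisely the approach the paper takes: the paper does not give an independent proof of Theorem~\ref{TE1} but simply invites the reader to rerun the arguments of \cite{Alves5}, noting that the only place $(\phi_4)$ enters is the barrier computation \eqref{5E1}--\eqref{5E2}, which still goes through under $(\tilde{\phi}_4)$. Your outline supplies more detail than the paper itself, and the key observation---that the extra constant $\kappa_1$ is absorbed by shrinking $a>0$ in \eqref{5E2}---is exactly what the paper intends.
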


When $A(x,y)$ is a positive constant, additional conditions regarding the geometric properties of the potential $V$ were deemed necessary within the framework presented in \cite{Alves4} to establish the existence of infinitely many geometrically distinct saddle-type solutions of equation \eqref{Equation2.1}.  Specifically, the following conditions on $V$ were explored ($V_{1}$)-($V_{2}$), ($V_{4}$)-($V_{5}$) and 
\begin{itemize}
	\item[$(V_{6})$] There exists $\delta_0>0$ such that $V'$ is increasing on $(\alpha-\delta_0,\alpha)$.  
	\item[$(V_{7})$] There exist $\gamma,\epsilon>0$ such that 
	$$
	\tilde{\Phi}(V'(t))\leq\gamma\Phi(|\alpha-t|),~~\forall t\in(\alpha-\epsilon,\alpha).
	$$
\end{itemize}
Therefore, motivated by Theorem \ref{TE1}, we can follow the steps of work \cite{Alves4} to derive the subsequent result.

\begin{theorem}\label{TE2}
	Assume ($\phi_1$)-($\phi_3$), ($\tilde{\phi}_4$), ($V_{1}$)-($V_{2}$), ($V_{4}$)-($V_{7}$) and that $A(x,y)$ is a positive constant. Then, For each $j\geq2$, there exists $v_j\in C^{1,\gamma}_{\text{loc}}(\mathbb{R}^2)$ for some $\gamma\in(0,1)$ such that $v_j$ is a weak solution of \eqref{Equation2.1}. Moreover, by setting $\tilde{v}_j(\rho,\theta)=v_j(\rho\cos(\theta),\rho\sin(\theta))$, we have that
	\begin{itemize}
		\item [(a)] $0<\tilde{v}_j(\rho,\theta)<\alpha$ for any $\theta\in[\frac{\pi}{2}-\frac{\pi}{2j},\frac{\pi}{2})$ and $\rho>0$,
		\item[(b)] $\tilde{v}_j(\rho,\frac{\pi}{2}+\theta)=-\tilde{v}_j(\rho,\frac{\pi}{2}-\theta)$ for all $(\rho,\theta)\in[0,+\infty)\times\mathbb{R}$,
		\item[(c)] $\tilde{v}_j(\rho,\theta+\frac{\pi}{j})=-\tilde{v}_j(\rho,\theta)$ for all $(\rho,\theta)\in[0,+\infty)\times\mathbb{R}$,
		\item[(d)] $\tilde{v}_j(\rho,\theta)\rightarrow(-\alpha)^{k+1}$ as $\rho\rightarrow+\infty$ whenever $\theta\in\left(\frac{\pi}{2}+k\frac{\pi}{j},\frac{\pi}{2}+(k+1)\frac{\pi}{j}\right)$ for \linebreak $k=0,\ldots,2j-1$.
	\end{itemize}
\end{theorem}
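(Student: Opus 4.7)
The plan is to mirror the construction in \cite{Alves4}, where infinitely many $j$-fold symmetric saddle solutions for the $\Phi$-Laplacian were built under the stronger monotonicity hypothesis $(\phi_4)$, and to replace every appeal to $(\phi_4)$ by the weaker condition $(\tilde{\phi}_4)$, exactly in the spirit of the passage from Theorem 1.2 of \cite{Alves5} to Theorem \ref{TE1}. First I would fix $j\geq 2$ and single out the fundamental angular sector
$$
\Omega_j=\bigl\{(\rho\cos\theta,\rho\sin\theta):\rho\geq 0,\ \theta\in[\tfrac{\pi}{2}-\tfrac{\pi}{2j},\tfrac{\pi}{2}]\bigr\},
$$
from which the global function $v_j$ will be reconstructed on $\mathbb{R}^2$ by odd reflection across the bisector $\theta=\pi/2$ (producing item (b)) followed by the rotations $\theta\mapsto \theta+\pi/j$ accompanied by the sign flip dictated by (c); an additional internal symmetry inside $\Omega_j$, analogous to the one enforced by $(A_4)$ in Theorem \ref{TE1}, will be imposed so that the reflection glues into a weak solution across the opposite edge as well.

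Next I would set up a constrained minimization problem over an Orlicz-Sobolev class on $\Omega_j$. The candidate action is
$$
J(u)=\int_{\Omega_j}\bigl[\Phi(|\nabla u|)+bV(u)\bigr]\,dx\,dy,
$$
minimized on the convex set of functions with $0\leq u\leq \alpha$, vanishing on the bisector $\theta=\pi/2$, carrying the internal sector symmetry mentioned above, and having the correct behavior along the opposite edge (where one prescribes the asymptotic value $\alpha$ via a heteroclinic-type competitor). Since the infimum would naively be infinite, I would follow \cite{Alves4} and first minimize on the truncated sector $\Omega_j\cap B_R(0)$, use coercivity and convexity coming from $(\phi_1)$-$(\phi_3)$ together with $(V_1)$-$(V_7)$ to produce a minimizer $u_R$, and then let $R\to\infty$ to extract $v_j$. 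The strict positivity $0<\tilde{v}_j<\alpha$ claimed in (a) would come from the strong maximum principle for $-\Delta_\Phi$ combined with $V'(0)=0$ and assumption $(V_6)$, which prevents $\tilde{v}_j$ from saturating at $\alpha$ in the interior; interior $C^{1,\gamma}_{\text{loc}}$ regularity follows from standard quasilinear regularity theory under $(\phi_1)$-$(\phi_3)$.

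The critical step, and what I expect to be the main obstacle, is item (d): the limit $\tilde{v}_j(\rho,\theta)\to(-\alpha)^{k+1}$ along interior rays. The proof in \cite{Alves4,Alves5} rests on exponential barriers built from the ansatz $\zeta$ appearing in \eqref{5E2}, and a careful reading shows that the only use of $(\phi_4)$ is to deduce inequality \eqref{5E1}. Since that inequality is, up to positive constants, precisely the content of $(\tilde{\phi}_4)$, the whole barrier-comparison machinery transports verbatim: placing copies of the basic heteroclinic barriers along the two edges of $\Omega_j$ pinches $\alpha-\tilde{v}_j(\rho,\theta)$ between $0$ and $\zeta(\rho)$ on compact angular subsets of the open interior, so that $\tilde{v}_j\to\alpha$ as $\rho\to\infty$ inside $\Omega_j$; reflecting and rotating via (b) and (c) then propagates this to the alternating sign pattern $(-\alpha)^{k+1}$ across the other wedges. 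Once the reduction to \eqref{5E1} is certified, the remaining bookkeeping, namely the convergence of minimizers, the gluing across reflections, and the classification of angular limits, is a direct transcription of the arguments in \cite{Alves4}.
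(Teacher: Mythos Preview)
Your proposal is correct and follows essentially the same approach as the paper: the paper's own argument for Theorem~\ref{TE2} consists solely of the remark that one can ``follow the steps of work \cite{Alves4}'' after replacing $(\phi_4)$ by $(\tilde{\phi}_4)$, together with the observation (made just before Theorem~\ref{TE1}) that the sole use of $(\phi_4)$ in \cite{Alves4,Alves5} is to obtain inequality~\eqref{5E1}, which is exactly the content of $(\tilde{\phi}_4)$. Your sketch fleshes out precisely this reduction, correctly pinpointing the exponential barrier comparison for item~(d) as the only place where the passage from $(\phi_4)$ to $(\tilde{\phi}_4)$ matters.
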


In fact, the conditions on $V$ in the aforementioned theorem can be refined; namely, conditions $(V_{6})$ and $(V_{7})$ can be omitted. To illustrate this, one can combine several arguments from the paper \cite{Alves5} and implement the concept of partitioning $\mathbb{R}^2$ into $2j$ disjoint triangular sets, where $j\geq2$, as described in the construction of saddle solutions presented in \cite[Section 6]{Alves4}. As a result, we can present the following theorem, with the details of its verification left to the reader.

\begin{theorem}\label{TE3}
	Assume ($\phi_1$)-($\phi_3$), ($\tilde{\phi}_4$), ($V_{1}$)-($V_{2}$), ($V_{4}$)-($V_{5}$) and that $A(x,y)$ is a positive constant. Then, For each $j\geq2$, there exists $v_j\in C^{1,\gamma}_{\text{loc}}(\mathbb{R}^2)$ for some $\gamma\in(0,1)$ such that $v_j$ is a weak solution of \eqref{Equation2.1}. Moreover, by setting $\tilde{v}_j(\rho,\theta)=v_j(\rho\cos(\theta),\rho\sin(\theta))$, we have that
	\begin{itemize}
		\item [(a)] $0<\tilde{v}_j(\rho,\theta)<\alpha$ for any $\theta\in[\frac{\pi}{2}-\frac{\pi}{2j},\frac{\pi}{2})$ and $\rho>0$,
		\item[(b)] $\tilde{v}_j(\rho,\frac{\pi}{2}+\theta)=-\tilde{v}_j(\rho,\frac{\pi}{2}-\theta)$ for all $(\rho,\theta)\in[0,+\infty)\times\mathbb{R}$,
		\item[(c)] $\tilde{v}_j(\rho,\theta+\frac{\pi}{j})=-\tilde{v}_j(\rho,\theta)$ for all $(\rho,\theta)\in[0,+\infty)\times\mathbb{R}$,
		\item[(d)] $\tilde{v}_j(\rho,\theta)\rightarrow(-\alpha)^{k+1}$ as $\rho\rightarrow+\infty$ whenever $\theta\in\left(\frac{\pi}{2}+k\frac{\pi}{j},\frac{\pi}{2}+(k+1)\frac{\pi}{j}\right)$ for \linebreak $k=0,\ldots,2j-1$.
	\end{itemize}
\end{theorem}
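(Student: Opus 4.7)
The plan is to transplant the pizza-sector geometry of \cite[Section 6]{Alves4} into the variational scheme of Theorem~\ref{TE1} (i.e.\ of \cite{Alves5}), so that the sector minimizer is constructed under the weakened hypotheses $(V_{1})$--$(V_{2}), (V_{4})$--$(V_{5}), (\tilde{\phi}_4)$, and the $2j$-fold rotational extension by odd reflections produces $v_j$. Fix $j\geq 2$ and partition $\mathbb{R}^2$ into $2j$ closed triangular sectors $T_0,\ldots,T_{2j-1}$ sharing the origin as a common vertex, separated by the half-lines $\ell_k=\{(\rho\cos\theta_k,\rho\sin\theta_k):\rho\geq 0\}$ with $\theta_k=\tfrac{\pi}{2}+k\tfrac{\pi}{j}$. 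I would work on the reference slice $T_0=\{\theta\in[\tfrac{\pi}{2}-\tfrac{\pi}{2j},\tfrac{\pi}{2}+\tfrac{\pi}{2j}]\}$ and seek a nonnegative function $v$ on $T_0$ with $v\leq\alpha$, vanishing on $\ell_0\cup\ell_1$, odd under reflection through those rays, and asymptotic to $\alpha$ on the interior of $T_0$ as $\rho\to\infty$; the pizza solution $v_j$ is then defined on $\mathbb{R}^2$ by alternating odd reflections across consecutive separators $\ell_k$, which is internally consistent since $2j$ is even.

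The sector minimizer is obtained along the lines of the proof of Theorem~\ref{TE1}. I would minimize
\[
J(u)=\int_{T_0}\bigl(\Phi(|\nabla u|)+bV(u)\bigr)\,dx\,dy
\]
over the admissible class of $u\in W^{1,\Phi}_{\mathrm{loc}}(T_0)$ with $0\leq u\leq\alpha$ on $T_0$ and $u=0$ on $\ell_0\cup\ell_1$, first restricting to bounded truncated sectors $T_0\cap B_R$ to produce minimizers $v_R$, and then letting $R\to\infty$ via a diagonal extraction using local compactness in the Orlicz--Sobolev topology. The key quantitative input is the barrier function $\zeta$ from \eqref{5E2}, now controlled by the weaker assumption $(\tilde{\phi}_4)$ rather than $(\phi_4)$; combined with $(V_{4})$--$(V_{5})$ this yields, as in Lemmas~4.4--4.6 of \cite{Alves5}, the pointwise estimate bounding $\alpha-v_R$ uniformly in $R$ inside $T_0$ away from its boundary, together with the strict positivity of the limit $v$ in the interior of the slice.

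Once $v$ is in hand, I would extend it to $v_j$ on $\mathbb{R}^2$ by alternating odd reflections across the rays $\ell_k$. The oddness of $V'$ from $(V_{2})$, the rotational invariance of the constant $A\equiv b$, and the invariance of $\Delta_\Phi$ under Euclidean isometries guarantee that the reflected function is still a critical point of $J$ in the interior of every sector. To verify the weak PDE across $\ell_k\setminus\{0\}$, I would test against $\varphi\in C^\infty_c(\mathbb{R}^2\setminus\{0\})$ with support meeting $\ell_k$: the two half-integrals produced by splitting along $\ell_k$ are related by a change of variables that reverses orientation and sign, and the resulting cancellation combined with the evenness of $\Phi$ eliminates the would-be boundary term. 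Lieberman-type regularity for quasilinear Orlicz--Sobolev equations then promotes $v_j$ to $C^{1,\gamma}_{\mathrm{loc}}$; the isolated point at the origin is handled by a standard removability argument using $|v_j|\leq\alpha$. Properties (a)--(c) follow from the construction, and (d) follows from the exponential-type $\zeta$-barrier of Step~2, applied sector by sector.

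The main obstacle will be Step~2: verifying that, under only $(\tilde{\phi}_4)$ together with $(V_{1})$--$(V_{2}), (V_{4})$--$(V_{5})$ and without recourse to $(V_{6})$--$(V_{7})$, the function $\zeta$ of \eqref{5E2}, composed with a suitable distance to $\partial T_0$, is a valid supersolution barrier for $\alpha-v$ on the triangular sector. This requires revisiting the comparison inequality \eqref{5E1} in the sector geometry and is precisely the point at which the sharper geometric assumptions of \cite{Alves4} are traded for the flexibility afforded by $(\tilde{\phi}_4)$ and the barrier construction of \cite{Alves5}.
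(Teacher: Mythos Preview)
Your plan is exactly what the paper indicates: it does not give a proof but merely asserts that one should combine the variational scheme and $\zeta$-barrier estimates of \cite{Alves5} (now run under $(\tilde{\phi}_4)$ in place of $(\phi_4)$) with the $2j$-sector partition and odd-reflection construction of \cite[Section~6]{Alves4}, leaving the details to the reader. One small cleanup: your reference slice $T_0=\{\theta\in[\tfrac{\pi}{2}-\tfrac{\pi}{2j},\tfrac{\pi}{2}+\tfrac{\pi}{2j}]\}$ is centered on the nodal ray $\ell_0=\{\theta=\tfrac{\pi}{2}\}$ rather than bounded by two consecutive $\ell_k$'s, so the requirement ``$v$ nonnegative on $T_0$ and vanishing on $\ell_0\cup\ell_1$'' is inconsistent with items (a)--(b); take instead the slice $\theta\in[\tfrac{\pi}{2}-\tfrac{\pi}{j},\tfrac{\pi}{2}]$ bounded by $\ell_{-1}$ and $\ell_0$.
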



\section{Saddle solution of the prescribed mean curvature equation}\label{s3}

Our primary goal in this section is to establish the existence of a saddle solution to the prescribed mean curvature equation \eqref{E1} under the condition that the global minima of the potential $V$ are sufficiently close. For this reason, we will index the number $\alpha$ in $V$, denoting it as $V_\alpha$, and throughout this section, we will assume that $V_\alpha\in\mathcal{V}\cap C^2(\mathbb{R},\mathbb{R})$. To achieve this, we first study an auxiliary problem of the form \eqref{Equation2.1}, proving the existence of a saddle solution within this context. The idea here is similar to those presented in work \cite{Alves1}, so the exposition will be brief. For our purposes, we employ a truncation scheme for the prescribed mean curvature operator, defined for each $L>0$ as follows:
$$
\varphi_L(t)=\left\{\begin{array}{lll}
	\dfrac{1}{\sqrt{1+t}},&\mbox{if}\quad t\in[0,L],
	\\
	x_L(t-L-1)^2+y_L,&\mbox{if}\quad t\in[L,L+1],
	\\
	y_L,&\mbox{if}\quad t\in[L+1,+\infty),
\end{array}\right.
$$
where the numbers $x_L$ and $y_L$ are expressed by
$$
x_L=\frac{\sqrt{1+L}}{4(1+L)^2}~\text{ and }~y_L=(4L+3)x_L.
$$
As a consequence, for each fixed $L>0$, we get the following quasilinear equation
\begin{equation}\label{E3}
	-\Delta_{\Phi_L}u+A(x,y)V'_\alpha(u)=0~~\text{in}~~\mathbb{R}^2,
\end{equation}
where $\Phi_L:\mathbb{R}\rightarrow[0,+\infty)$ is an $N$-function of the form
$$
\Phi_L(t)=\int_{0}^{|t|}\phi_L(s)sds~~\text{with}~~\phi_L(t)=\varphi_L(t^2).
$$

The following two lemmas will be frequently referenced throughout this work and they can be found in \cite{Alves1,Alves2}.

\begin{lemma}\label{L1}
	For each $L>0$, the functions $\phi_L$ and $\Phi_L$ satisfy the following properties:
	\begin{itemize}
		\item [(a)] $\phi_L$ is $C^1$.
		\item [(b)] $y_L\leq \phi_L(t)\leq 1$ for each $t\geq 0$.
		\item [(c)] $\dfrac{y_Lt^2}{2}\leq \Phi_L(t)\leq \dfrac{t^2}{2}$ for any $t\geq0$.
		\item [(d)] $\Phi_L$ is a convex function. 
		\item [(e)] $\left(\phi_L\left(t\right)t\right)'>0$ for all $t>0$.
	\end{itemize}
\end{lemma}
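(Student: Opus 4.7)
The plan is to verify the five items by direct computation from the piecewise definition of $\varphi_L$, together with the identities $\phi_L(t)=\varphi_L(t^2)$ and $\Phi_L(t)=\int_0^{|t|}\phi_L(s)s\,ds$. The specific values $x_L=\sqrt{1+L}/(4(1+L)^2)$ and $y_L=(4L+3)x_L$ are tuned precisely so that $\varphi_L$ glues in a $C^1$ fashion, and once this is confirmed most conclusions drop out in one or two lines; only the middle piece of item (e) calls for a genuine estimate.

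For (a), I would check the matching conditions of $\varphi_L$ at the breakpoints $t=L$ and $t=L+1$. At $t=L$, value matching reduces to $(4L+4)x_L=1/\sqrt{1+L}$ and slope matching reduces to $-2x_L=-1/[2(1+L)^{3/2}]$; both identities are built into the definitions of $x_L$ and $y_L$. At $t=L+1$, both pieces take the value $y_L$ and have vanishing derivative. Hence $\varphi_L\in C^1([0,+\infty))$, and $\phi_L(t)=\varphi_L(t^2)$ is $C^1$ by the chain rule. For (b), inspection of the three pieces shows that $\varphi_L$ is non-increasing with $\varphi_L(0)=1$ and $\varphi_L(s)=y_L$ for $s\geq L+1$, so $y_L\leq\varphi_L(s)\leq 1$; this transfers directly to $\phi_L$. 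Multiplying by $s$ and integrating over $[0,|t|]$ yields (c).

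For (e), I would substitute $u=t^2$ and write
\[
(\phi_L(t)t)' \;=\; \varphi_L(t^2)+2t^2\varphi_L'(t^2)\;=\;\varphi_L(u)+2u\varphi_L'(u).
\]
On $(0,L]$ this simplifies to $(1+u)^{-3/2}>0$, and on $(L+1,+\infty)$ it equals $y_L>0$. On the middle piece $[L,L+1]$, setting $v=u-L-1\in[-1,0]$ and invoking $y_L=(4L+3)x_L$ reduces the expression to $x_L\bigl[5v^2+4(L+1)v+(4L+3)\bigr]$. The quadratic in $v$ has vertex at $v^\ast=-2(L+1)/5$; a case split on whether $v^\ast\leq -1$ (the minimum on $[-1,0]$ is then $f(-1)=4$) or $v^\ast\in(-1,0)$ (with minimum value $(-4L^2+12L+11)/5$) yields positivity for every $L>0$. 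Item (d) then follows from (e): for $t\geq 0$ one has $\Phi_L'(t)=\phi_L(t)t$, strictly increasing by (e), so $\Phi_L$ is convex on $[0,+\infty)$, and evenness extends convexity to all of $\mathbb{R}$.

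The main obstacle is precisely this middle-piece analysis: the negative contribution of $\varphi_L'<0$ on $(L,L+1)$ must be absorbed by the additive $y_L$, and this succeeds only because $y_L=(4L+3)x_L$ is the exact constant forced by the $C^1$-matching established in step (a). All remaining items are bookkeeping relying on the shape of $\varphi_L$ and the change of variables between $\varphi_L$ and $\phi_L$.
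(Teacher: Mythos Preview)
Your argument is correct in every item; the $C^1$-matching, the monotonicity of $\varphi_L$, the integration for (c), the quadratic analysis on the middle piece for (e), and the deduction of (d) from (e) all check out. Note, however, that the paper does not prove this lemma at all: it simply states that Lemma~\ref{L1} and the one after it ``can be found in \cite{Alves1,Alves2}'', so there is no in-paper proof to compare against. Your direct verification is a clean, self-contained substitute for that citation.
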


\begin{lemma}\label{L2}
		For each $L>0$, it turns out that $\phi_L$ satisfies $(\phi_1)$-$(\phi_3)$.
\end{lemma}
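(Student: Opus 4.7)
The plan is to verify $(\phi_1)$, $(\phi_2)$, $(\phi_3)$ in turn, exploiting the piecewise structure of $\phi_L(t)=\varphi_L(t^2)$ together with the properties already recorded in Lemma \ref{L1}. Condition $(\phi_1)$ is immediate: Lemma \ref{L1}(b) yields $\phi_L(t)\ge y_L>0$ for every $t\ge 0$, so in particular $\phi_L(t)>0$ for $t>0$, while $(\phi_L(t)t)'>0$ for $t>0$ is precisely Lemma \ref{L1}(e).

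For $(\phi_3)$ I would fix any $\eta\in(0,\sqrt{L})$. On $[0,\eta]$ one has $\phi_L(t)=(1+t^2)^{-1/2}$, so
\[
\frac{1}{\sqrt{1+\eta^2}}\,t\;\le\;\phi_L(t)t\;=\;\frac{t}{\sqrt{1+t^2}}\;\le\; t,\qquad t\in[0,\eta],
\]
which delivers $(\phi_3)$ with $s=2$, $c_1=1/\sqrt{1+\eta^2}$ and $c_2=1$.

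The main step is $(\phi_2)$. I would compute the ratio $R(t):=(\phi_L(t)t)'/\phi_L(t)$ on each of the three subintervals prescribed by the definition of $\varphi_L$. On $(0,\sqrt{L})$ a direct calculation gives $R(t)=1/(1+t^2)\in[1/(1+L),\,1]$. On $(\sqrt{L+1},+\infty)$ the function $\phi_L$ equals the constant $y_L$, so $R(t)\equiv 1$. On the bridging interval $[\sqrt{L},\sqrt{L+1}]$ one differentiates $\phi_L(t)t=t[x_L(t^2-L-1)^2+y_L]$ and uses the explicit expressions for $x_L$ and $y_L$ to check that $R(\sqrt{L})=1/(L+1)$ and $R(\sqrt{L+1})=1$, so $R$ matches continuously with the neighboring pieces (consistent with the $C^1$-regularity in Lemma \ref{L1}(a)). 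Since $\phi_L>0$ and $(\phi_L(t)t)'>0$ on $(0,+\infty)$ by Lemma \ref{L1}, $R$ is a positive continuous function on the compact set $[\sqrt{L},\sqrt{L+1}]$, hence bounded between two positive constants $\ell_L$ and $M_L$. Setting $l:=1+\min\{1/(1+L),\,\ell_L\}>1$ and $m:=1+\max\{1,\,M_L\}$ produces the double inequality required by $(\phi_2)$, with $1<l\le m$.

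The main obstacle I expect is precisely the analysis of $R$ on the middle interval $[\sqrt{L},\sqrt{L+1}]$, where $\phi_L$ is neither the mean-curvature tail nor the stabilized constant and the numerator $x_L[(t^2-L-1)^2+4t^2(t^2-L-1)]+y_L$ has no obvious sign. The key to sidestep an explicit case analysis of that polynomial is to appeal to the positivity already supplied by Lemma \ref{L1}(b) and (e) and to invoke compactness of $[\sqrt{L},\sqrt{L+1}]$ to extract a uniform positive lower bound and a finite upper bound for $R$, which is exactly what $(\phi_2)$ demands.
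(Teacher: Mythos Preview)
Your verification is correct. For $(\phi_1)$ and $(\phi_3)$ the argument is immediate as you indicate, and for $(\phi_2)$ your computation of $R(t)=(\phi_L(t)t)'/\phi_L(t)$ on each of the three pieces is accurate (in particular $R(t)=1/(1+t^2)$ on $(0,\sqrt{L}\,)$, $R\equiv 1$ on $(\sqrt{L+1},\infty)$, and the endpoint values $R(\sqrt{L})=1/(1+L)$, $R(\sqrt{L+1})=1$ match); the compactness argument on $[\sqrt{L},\sqrt{L+1}]$ then legitimately produces the constants $l>1$ and $m\ge l$.

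The paper, however, does not actually prove this lemma: it simply records it together with Lemma~\ref{L1} and refers the reader to \cite{Alves1,Alves2} for the verification. So your proof is not a different route to the same argument---it is a self-contained justification where the paper is content to cite. The benefit of your approach is that the reader does not have to track down the earlier references to see why the truncation $\phi_L$ fits the abstract framework; the cost is only the short piecewise computation you give, which is routine once one observes that Lemma~\ref{L1}(b),(e) already guarantee positivity of both numerator and denominator in $R(t)$.
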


A direct check shows that $\phi_L$ exhibits a monotonically non-increasing behavior over the interval $(0,+\infty)$. Consequently, $\phi_L$ does not satisfy $(\phi_4)$. However, each $\phi_L$ satisfies $(\tilde{\phi}_4)$, as the following lemma says.

\begin{lemma}\label{L}
	For each $L>0$, the function $\phi_L$ satisfies $(\tilde{\phi}_4)$.
\end{lemma}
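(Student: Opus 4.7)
The plan is to exploit the fact that, although $\phi_L$ fails to be monotonically non-decreasing (in fact it is non-increasing), it is nevertheless sandwiched between two strictly positive constants. Specifically, Lemma \ref{L1}(b) states that $y_L \leq \phi_L(s) \leq 1$ for every $s \geq 0$, where $y_L = (4L+3)x_L > 0$ since $L > 0$. This two-sided bound is the only ingredient needed to verify $(\tilde{\phi}_4)$, so the proof should be essentially a one-line consequence of Lemma \ref{L1}.

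Concretely, for every $t \in \mathbb{R}$ the arguments $|\zeta'(t)|$ and $\kappa_2\zeta(t)$ are both nonnegative (indeed $\zeta > 0$ as a $\cosh$-ratio, so $\phi_L$ is being evaluated in its natural domain $[0,+\infty)$). For any choice of positive constant $\kappa_2$, combining the upper bound $\phi_L \leq 1$ at the point $|\zeta'(t)|$ with the lower bound $\phi_L \geq y_L$ at the point $\kappa_2\zeta(t)$ yields
\[
\phi_L\bigl(|\zeta'(t)|\bigr) \;\leq\; 1 \;=\; \frac{1}{y_L}\cdot y_L \;\leq\; \frac{1}{y_L}\,\phi_L\bigl(\kappa_2\zeta(t)\bigr).
\]
Setting $\kappa_1 := 1/y_L$ and, say, $\kappa_2 := 1$, this is precisely the inequality required by $(\tilde{\phi}_4)$. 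In fact it holds for every $a > 0$, not merely for $a$ sufficiently small, so the smallness clause in $(\tilde{\phi}_4)$ is automatically satisfied.

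There is no real obstacle to the proof: condition $(\tilde{\phi}_4)$ was introduced precisely to accommodate functions $\phi$ like $\phi_L$, for which pointwise comparison between $\phi(|\zeta'|)$ and $\phi(\kappa_2\zeta)$ through monotonicity (as in $(\phi_4)$) is not available, but for which a two-sided bound on the range of $\phi$ instantly gives the desired estimate. The only care required in the writeup is to record that $y_L > 0$ whenever $L > 0$, so that the constant $\kappa_1 = 1/y_L$ is finite; this in turn guarantees that the broader framework built in Section \ref{s2} genuinely covers the truncated mean curvature operator, which is exactly what will be needed in Section \ref{s3} to deduce Theorems \ref{T1} and \ref{T2} from Theorems \ref{TE1} and \ref{TE3}.
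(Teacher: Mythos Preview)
Your proof is correct and essentially identical to the paper's own argument: both invoke Lemma~\ref{L1}(b) to get $y_L \leq \phi_L \leq 1$, deduce $\phi_L(|\zeta'(t)|) \leq 1 \leq \tfrac{1}{y_L}\phi_L(\zeta(t))$, and observe that no smallness of $a$ is required. The paper also records this argument as a general remark, noting that any $\phi$ bounded between two positive constants automatically satisfies $(\tilde{\phi}_4)$.
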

\begin{proof}
	Indeed, according to Lemma \ref{L1}-$(b)$ we have that
	$$
	y_L\leq \phi_L(t)\leq 1~~\text{for all}~~t\geq 0.
	$$
	In particular, 
	$$
	y_L\leq \phi_L(|\zeta'(t)|), \phi_L(\zeta(t))\leq 1~~\text{for all}~~ t\in\mathbb{R},
	$$	
	and therefore, 
	$$
	\phi_L(|\zeta'(t)|)\leq \frac{1}{y_L}\phi_L(\zeta(t))~~\text{for all}~~ t\in\mathbb{R}. 
	$$
	Finally, since there is no restriction for the constant $a>0$ in \eqref{5E2}, the lemma follows.
\end{proof}

The evidence provided in Lemma \ref{L} leads to the following observation.

\begin{remark}
	By the argument of the previous lemma, we can conclude that any function $\phi:[0,+\infty)\to[0,+\infty)$ satisfying 
	$$
	c_1\leq \phi(t)\leq c_2~~\text{for all}~~t\geq 0
	$$ 
	for some constants $c_1,c_2>0$ satisfies condition $(\tilde{\phi}_4)$.
\end{remark}

To find saddle solutions for the equation \eqref{E1}, our approach involves examining the existence of these solutions within the framework of the auxiliary problem \eqref{E3}. To begin with, note that the condition $V_\alpha\in \mathcal{V}\cap C^2(\mathbb{R},\mathbb{R})$ implies that $V_\alpha$ satisfies the conditions $(V_{4})$ with respect to $\Phi_L$ and $(V_{5})$ concerning $\phi_L$. Indeed, by $(V_{2})$ it is possible to find numbers $\rho_1,\rho_2,d_1,d_2,d_3>0$ such that 
$$
|V_\alpha'(t)|\leq d_1|t-\alpha|~~\text{for all}~~t\in[\alpha-\rho_1,\alpha+\rho_1]
$$
and
$$
d_2|t-\alpha|^2\leq V_\alpha(t)\leq d_3|t-\alpha|^2~~\text{for all}~~ t\in(\alpha-\rho_2,\alpha+\rho_2),
$$
from which it follows by Lemma \ref{L1} that
$$
|V_\alpha'(t)|\leq \frac{d_1}{y_L}\phi_L(|t-\alpha|)|t-\alpha|~~\text{for all}~~ t\in[\alpha-\rho_1,\alpha+\rho_1]
$$
and
$$
2d_2\Phi_L(|t-\alpha|)\leq V_\alpha(t)\leq \frac{2d_3}{y_L}\Phi_L(|t-\alpha|)~~\text{for all}~~ t\in(\alpha-\rho_2,\alpha+\rho_2).	
$$	
Hence, we may use Theorem \ref{TE1} to obtain, for each $L>0$, a saddle solution denoted as $v_{\alpha,L}:\mathbb{R}^2\to\mathbb{R}$ to equation \eqref{E3}. This solution, $v_{\alpha,L}$, is a weak solution to \eqref{E3} belonging to $C^{1,\gamma}_{\text{loc}}(\mathbb{R}^2)$, for some $\gamma\in(0,1)$, satisfying
$$
|v_{\alpha,L}(x,y)|\leq\alpha~~\text{ for all }~~(x,y)\in\mathbb{R}^2.
$$ 
Moreover, it preserves the same sign pattern as $xy$, odd in both the variables $x$ and $y$, symmetric with respect to the diagonals $y=\pm x$ and presents the asymptotic behavior described in items $(d)$ and $(e)$ of Theorem \ref{TE1}. We are now going to use this information together with condition $(V_3)$ to establish an estimate involving the functions $v_{\alpha,L}$.

\begin{lemma}\label{LE51}
	There exists $\delta>0$ such that for each $V_{\alpha} \in \mathcal{V}$ with $\alpha\in(0,\delta)$, the function $v_{\alpha,L}$ satisfies following the estimate below  
	\begin{equation}\label{E8}
		\|v_{\alpha,L}\|_{C^1(B_{1}(z))}<\sqrt{L},
	\end{equation}
	where $B_{1}(z)$ denotes any open ball in $\mathbb{R}^2$ with radius 1.
\end{lemma}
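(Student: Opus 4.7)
The plan is to argue by contradiction. Suppose the desired estimate fails: then there exist a sequence $\alpha_n\to 0^+$ with $V_{\alpha_n}\in\mathcal{V}$, together with centers $z_n\in\mathbb{R}^2$, such that
$$
\|v_{\alpha_n,L}\|_{C^1(B_1(z_n))}\geq \sqrt{L}.
$$
Setting $w_n(x,y):=v_{\alpha_n,L}((x,y)+z_n)$ and $A_n(x,y):=A((x,y)+z_n)$, each $w_n$ is a weak solution in $\mathbb{R}^2$ of
$$
-\Delta_{\Phi_L}w_n+A_n(x,y)\, V'_{\alpha_n}(w_n)=0,
$$
and satisfies $\|w_n\|_{L^\infty(\mathbb{R}^2)}\leq \alpha_n\to 0$. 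The goal is then reduced to proving that $\|w_n\|_{C^1(B_1(0))}\to 0$, which directly contradicts $\|w_n\|_{C^1(B_1(0))}=\|v_{\alpha_n,L}\|_{C^1(B_1(z_n))}\geq \sqrt{L}$.

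The first step is to obtain uniform control on the right-hand side. Continuity and $1$-periodicity of $A$ give $\|A_n\|_{L^\infty}\leq\|A\|_{L^\infty}$, while condition $(V_3)$ applied with any fixed $\lambda>0$ (say $\lambda=1$, valid for $n$ large since $\alpha_n\to 0$) yields a constant $C>0$, independent of $n$, with $|V'_{\alpha_n}(w_n)|\leq C$, because $|w_n|\leq\alpha_n$. Hence the right-hand side of the equation for $w_n$ is uniformly bounded in $L^\infty(\mathbb{R}^2)$. On the other hand, Lemma \ref{L1}(b) gives $y_L\leq \phi_L\leq 1$, so the truncated operator $\Delta_{\Phi_L}$ is uniformly elliptic — this is precisely the purpose of the truncation $\varphi_L$.

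The second step is to invoke standard interior $C^{1,\gamma}$ regularity for quasilinear uniformly elliptic equations in divergence form (for instance Lieberman's estimates). Applied on the ball $B_2(0)$, this furnishes $\gamma\in(0,1)$ and a uniform bound
$$
\|w_n\|_{C^{1,\gamma}(\overline{B_1(0)})}\leq K,
$$
where $K$ depends only on $L$, $\|A\|_{L^\infty}$ and $C$, and in particular is independent of $n$. An Arzel\`a--Ascoli argument then extracts a subsequence converging in $C^1(\overline{B_1(0)})$ to some limit $w$; the uniform decay $\|w_n\|_{L^\infty}\leq\alpha_n\to 0$ forces $w\equiv 0$, whence $\|w_n\|_{C^1(\overline{B_1(0)})}\to 0$, closing the contradiction.

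I expect the main obstacle to be ensuring that the $C^{1,\gamma}$ regularity estimate quoted above holds with a constant truly independent of $n$, i.e., independent of both $\alpha_n$ and the translation $z_n$. This demands a form of the regularity theorem whose constants depend only on the uniform ellipticity of $\Delta_{\Phi_L}$, the $L^\infty$ norm of the solution, and the $L^\infty$ norm of the right-hand side; the translation $z_n$ does not interfere because the principal operator has constant coefficients and the only $n$-dependence in $A_n$ is through the translation, which does not alter $\|A_n\|_{L^\infty}$. Once these uniform bounds are established, the compactness-contradiction argument proceeds smoothly.
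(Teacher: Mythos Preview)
Your proposal is correct and follows essentially the same approach as the paper: contradiction, translation by $z_n$, uniform $L^\infty$ control of the right-hand side via $(V_3)$ and the boundedness of $A$, Lieberman's interior $C^{1,\gamma}$ estimates with constants independent of $n$, Arzel\`a--Ascoli compactness, and identification of the limit as zero from $\|w_n\|_{L^\infty}\leq\alpha_n\to 0$. The only (cosmetic) differences are notational, and your explicit mention of applying the regularity estimate on $B_2(0)$ to control the $C^{1,\gamma}$ norm on $\overline{B_1(0)}$ is in fact a helpful clarification.
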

\begin{proof}
	To establish this lemma, we employ a proof by contradiction. Let us assume the contrary. Then for each $n \in \mathbb{N}$ there exists a potential $V_{\alpha_n,\beta_n}\in \mathcal{V}\cap C^2(\mathbb{R},\mathbb{R})$ with $\alpha\in\left(0,\frac{1}{n}\right)$ and $z_n=(z_{n,1},z_{n,2})\in\mathbb{R}^2$ such that the function $v_{\alpha_n,L}$ satisfies
	\begin{equation}\label{E6}
		\|v_{\alpha_n,L}\|_{C^1(B_1(z_n))}\geq \sqrt{L}~~\text{for all}~~ n\in\mathbb{N}.
	\end{equation} 
	For the purpose of our analysis, we will study the regularity of specific solutions to the following elliptic equation: 
	\begin{equation}\label{E4}
		-\Delta_{\Phi_{L}}u+B_n(x,y)=0~~\text{ in }~~\mathbb{R}^2,
	\end{equation}
	where the scalar measurable function $B_n:\mathbb{R}^2\to\mathbb{R}$ is defined as:
	$$
	B_{n}(x,y)=A(x+z_{n,1},y+z_{n,2})V_{\alpha_n}'(v_{\alpha_n,L}(x,y)).
	$$
	Under condition $(V_3)$, $B_{n}$ satisfies the subsequent estimate: 
	\begin{equation}\label{NI}
		|B_{n}(x,y)|\leq C\|A\|_{L^{\infty}(\mathbb{R}^2)}~~\forall(x,y)\in\mathbb{R}^2\text{ and }\forall n\geq n_0,
	\end{equation}
	for some $n_0\in\mathbb{N}$ and $C>0$, where $C$ is independent of $n$. Indeed, it's worth noting that the condition $\alpha_n\to 0$ as $n\to+\infty$ implies the existence of $n_0\in\mathbb{N}$ such that $\alpha_n\in(0,1)$ for all $n\geq n_0$. Consequently, by $(V_3)$, it follows that there exists a constant $C=C(1)>0$ independent of $\alpha\in(0,1)$ satisfying
	$$
	\displaystyle\max_{|t|\in[0,\alpha]}|V'_\alpha(t)|\leq C, \quad \forall V_\alpha\in\mathcal{V} \quad \text{with} \quad \alpha\in(0,1).
	$$
	Now, since $\alpha_n\in (0,1)$ for all $n\geq n_0$ and 
	$$
	|v_{\alpha_n,L}(x,y)|\leq\alpha_n\quad \forall (x,y)\in\mathbb{R}^2\quad \forall n\in\mathbb{N},
	$$ 
	we obtain  
	$$
	|V'_{\alpha_n}(v_{\alpha_n,L}(x,y))|\leq C,~~\forall (x,y)\in\mathbb{R}^2~~\text{and}~~ \forall n\geq n_0,
	$$
	which is sufficient to ensure inequality \eqref{NI}. A particularly relevant weak solution to \eqref{E4} is given by:
	$$
	u_n(x,y)=v_{\alpha_n,L}(x+z_{n,1},y+z_{n,2})~\text{ for }~(x,y)\in\mathbb{R}^2.
	$$
	Now, utilizing inequality \eqref{NI}, we can employ elliptic regularity estimates on $u_n$, as developed by Lieberman in \cite[Theorem 1.7]{Lieberman}. This yields $u_n\in C^{1,\gamma_0}_{\text{loc}}(\mathbb{R}^2)$ for some $\gamma_0\in(0,1)$, and
	$$
	\|u_n\|_{C^{1,\gamma_0}_{\text{loc}}(\mathbb{R}^2)}\leq R~~\text{for all}~~n\geq n_0,
	$$
	where $R$ is a positive constant independent of $n$. Consequently, from the above estimate, employing Arzel\`a-Ascoli's theorem, we deduce the existence of $u\in C^1(B_1(0))$ and a subsequence of $(u_n)$, still denoted by $(u_n)$, such that
	\begin{equation}\label{E5}
		u_n\to u~~\text{ in }~~C^1(B_1(0)).
	\end{equation}
	Since $\|v_{\alpha_n,L}\|_{L^{\infty}(\mathbb{R}^2)}$ goes to $0$ as $\alpha_n\to 0$, 
	$$
	\|u_n\|_{L^{\infty}(\mathbb{R}^2)}\to 0~~\text{as}~~n\to+\infty,
	$$
	thus from the convergence \eqref{E5}, we naturally infer that $u=0$ on $B_1(0)$. Therefore, there exists $\tilde{n}\in\mathbb{N}$ such that:
	$$
	\|u_n\|_{C^1\left(B_1(0)\right)}<\sqrt{L}~~\text{for all}~~n\geq \tilde{n},
	$$
	which, arising from the definition of $u_n$, implies:
	$$
	\|v_{\alpha_n,L}\|_{C^1(B_1(z_n))}<\sqrt{L}~~\text{for all}~~n\geq \tilde{n},
	$$ 
	which contradicts \eqref{E6}. Hence, the proof of the lemma is complete.
\end{proof}

The estimate \eqref{E8} will play a key role in the theorem below, facilitating the investigation of the existence of saddle solutions for the prescribed mean curvature equation \eqref{E1} whenever the roots $\pm\alpha$ of $V$ are sufficiently close.

\noindent {\bf The proof of Theorem \ref{T1}.} 

We claim that for each fixed $L>0$ there exists $\delta>0$ such that for each $V_{\alpha} \in \mathcal{V}\cap C^2(\mathbb{R},\mathbb{R})$ with $\alpha\in(0,\delta)$, the function  $v_{\alpha,L}$ satisfies the estimate below  
	\begin{equation}\label{E7}
		\|\nabla v_{\alpha,L}\|_{L^{\infty}(\mathbb{R}^2)}\leq \sqrt{L}.
	\end{equation}
	In fact, given any $(x,y)\in\mathbb{R}^2$ we can choose a point $z\in\mathbb{R}^2$ such that $(x,y)\in B_1(z)$. By virtue of Lemma \ref{LE51}, there exists $\delta>0$ such that if $V_{\alpha} \in \mathcal{V}\cap C^2(\mathbb{R},\mathbb{R})$ and $\alpha\in(0,\delta)$ $\alpha\in(0,\delta)$ one gets
	$$
	\|v_{\alpha,L}\|_{C^1(B_{1}(z))}<\sqrt{L}.
	$$
	In particular,
	$$
	\|\nabla v_{\alpha,L}\|_{L^{\infty}(B_{1}(z))}\leq \sqrt{L}.
	$$
	Therefore, the claim \eqref{E7} is valid from the arbitrariness of $(x,y)\in\mathbb{R}^2$, and thus, the estimate \eqref{E7} guarantees that $v_{\alpha,L}$ is a saddle solution to the equation \eqref{E1} satisfying items $(a)$ to $(f)$, thanks to the study developed in the auxiliary problem \eqref{E3}. This completes the proof. \hspace{0.5cm} $\square$

In the case where $A(x,y)$ is a positive constant, the existence of infinitely many saddle-type solutions for the prescribed mean curvature equation \eqref{E1} can be established. These solutions, bearing a geometry reminiscent of pizzas, may be named as "pizza solutions". Employing the strategy outlined in proving Theorem \ref{T1}, we can prove the Theorem \ref{T2}, which is a multiplicity result concerning saddle-type solutions to \eqref{E1}.


\section{Final remarks}\label{s4}

In this last section, we aim to highlight that although we have made advancements over the results presented in works \cite{Alves4,Alves5}, particularly concerning saddle solutions to quasilinear elliptic equations represented by
\begin{equation}\label{E10}
	-\Delta_{\Phi}u + A(x,y)V'(u)=0~~\text{ in }~~\mathbb{R}^2
\end{equation}
by imposing condition $(\tilde{\phi}_4)$ on $\phi$ instead of $(\phi_4)$, a significant challenge remains unresolved. Specifically, the case $\phi(t)=t^{p-2}$ with $p\in(1,2)$, associated with the $N$-function
$$
\Phi(t)=\frac{|t|^p}{p}~~\text{for}~~p\in(1,2),
$$ 
presents an open problem because, in this case, $\phi$ does not satisfy $(\tilde{\phi}_4)$. This fact arises from the observation that through a direct calculation, we have
$$
\frac{\phi(|\zeta'(t)|)}{\phi(\kappa_2\zeta(t))}\to +\infty~~\text{as}~~|t|\to \frac{j-\frac{1}{4}+L}{2},
$$
for any positive constant $\kappa_2$. This divergence is evident from the behavior of
$$
\left| \frac{\sinh(t)}{\cosh(t)}\right| =\left| \frac{e^{2t}-1}{e^{2t}+1} \right|,
$$
which tends to zero as $|t|\to 0$, resulting in
$$
\left| \frac{\sinh(t)}{\cosh(t)}\right|^{p-2}\to +\infty ~~\text{as}~~|t|\to 0~~\text{for all}~~p\in(1,2).
$$
Finally, it's worth noting that exploring exponential decay type estimates to determine saddle solutions in cases where \eqref{E10} involves the $p$-Laplacian operator with $1<p<2$ is potentially difficult and interesting.
 
\vspace{0.2 cm}


\begin{thebibliography}{1} 
	
\bibitem{Alessio0} F. Alessio, P. Montecchiari and A. Calamai. \textit{Saddle-type solutions for a class of semilinear elliptic equations}  Adv. Differential Equations 12, 361–380 (2007).
	
\bibitem{Alessio2} F. Alessio, C. Gui and P. Montecchiari, \textit{Saddle solutions to Allen-Cahn equations in doubly periodic media}, Indiana Univ. Math. J., 65, 2016, 1, 199-221. 

\bibitem{Alessio1} F. Alessio and P. Montecchiari, {\it Layered solutions with multiple asymptotes for non autonomous Allen-Cahn equations in $\mathbb{R}^3$}, Calc. Var. Partial Differential Equations, 46, 2013, 3-4, 591-622.	

\bibitem{AJM2} F. Alessio, L. Jeanjean and P. Montecchiari, {\em Existence of infinitely many stationary layered solutions in $\mathbb{R}^{2}$ for a class of periodic Allen Cahn Equations}, Commun. Partial Differ. Equations, 27, 2002, 7-8, 1537-1574.

\bibitem{Allen} S. Allen and J. Cahn, {\it A microscopic theory for the antiphase boundary motion and its application to antiphase domain coarsening}, Acta Metallurgica, 27, 1979, 1085-1095.

\bibitem{AAT} C. O. Alves, V. Ambrosio and C. E. Torres Ledesma, {\it Existence of heteroclinic solutions for a class of problems involving the fractional Laplacian}, Analysis and Applications, 17, 03, 2019, 425-451.
	
\bibitem{Alves1} C. O. Alves and R. Isneri, {\it Existence of heteroclinic solutions for the prescribed curvature equation}, J. Differential Equations, 362, 2023, 484-513.	

\bibitem{Alves3} C. O. Alves and R. Isneri, {\it Heteroclinic solutions for some classes of prescribed mean curvature equations in whole $\mathbb{R}^2$}, preprint, 2023.

\bibitem{Alves2} C. O. Alves, R. Isneri and P. Montecchiari, {\it Uniqueness of heteroclinic solutions in a class of autonomous quasilinear ODE problems}, preprint, 2023.

\bibitem{Alves4} C. O. Alves, R. Isneri and P. Montecchiari, {\it Existence of saddle-type solutions for a class of quasilinear problems in $\mathbb{R}^2$}, Topol. Methods Nonlinear Anal.,  61, 2, 2023, 825-868. 

\bibitem{Alves5} C. O. Alves, R. Isneri and P. Montecchiari, {\it Existence of heteroclinic and saddle type solutions for a class of quasilinear problems in whole $\mathbb{R}^2$}, Commun. Contemp. Math., 26, 2024, 02, 2250061.

\bibitem{Bonheure} D. Bonheure and F. Hamel, {\it One-dimensional symmetry and Liouville type results for the fourth order Allen-Cahn equation in $\mathbb{R}^N$}, Chinese Annals of Mathematics, Series B, 38, 2017, 149-172.

\bibitem{Bonheure1} D. Bonheure, F. Obersnel and P. Omari, {\it Heteroclinic solutions of the prescribed curvature equation with a double-well potential}, Differential Integral Equations, 26, 2013, 11-12, 1411-1428.

\bibitem{Cabre2} X. Cabr\`e and Y. Sire, {\it Nonlinear equations for fractional Laplacians II: Existence, uniqueness, and qualitative properties of solutions}, Trans. Amer. Math. Soc., 367, 2015, 2, 911-941.

\bibitem{Kurganov} A. Kurganov and P. Rosenau, {\it On reaction processes with saturating diffusion}, Nonlinearity 19, 2006, 1, 171-193.

\bibitem{Lieberman} G. M. Lieberman, \textit{The natural generalization of the natural conditions of Ladyzhenskaya and Urall'tseva for elliptic equations}, Comm. Partial Differential Equations, 16, 1991, 2-3, 311-361.

\bibitem{Omari} F. Obersnel and P. Omari, {\it Positive solutions of the Dirichlet problem for the \linebreak prescribed mean curvature equation}, J. Differential Equations. 249, 2010, 7, 1674-1725.

\bibitem{Rabinowitz1} P. H. Rabinowitz and E. Stredulinsky. {\it Mixed states for an Allen-Cahn type equation}, Comm. Pure Appl. Math., 56, 2003, 8, 1078-1134. 

\bibitem{RSlibro} P. H. Rabinowitz and E. Stredulinsky, {\it Extensions of Moser-Bangert Theory: \linebreak Locally Minimal Solutions}, Progress in Nonlinear Differential Equations and Their \linebreak Applications, 81, Birkhauser, Boston, 2011.

\end{thebibliography}
\end{document}